%\documentclass{amsart}
%\usepackage{amssymb}
%\usepackage{latexsym}
%\newcounter{qcounter}
%\usepackage{amsmath,amsthm,amscd,enumerate,verbatim,color}
%\usepackage{enumerate}

\documentclass{amsart}

\usepackage{amssymb,comment,amsmath,amscd,amsthm,enumerate,verbatim,color}
\usepackage[utf8]{inputenc}

\usepackage{graphicx}
\usepackage{pdfsync}
\usepackage{subfigure}
\usepackage{tikz}
\usepackage[linktocpage=true]{hyperref}

\hypersetup{ colorlinks   = true, %Colours links instead of ugly boxes
urlcolor  = blue, %Colour for external hyperlinks
linkcolor    = black, %Colour of internal links
citecolor   = black %Colour of citations
}

%\date{\today}

\def\t{{\theta}}
\def\l{{\lambda}}

\def\a{{\alpha}}
\def\e{{\varepsilon}}
\def\beq{\begin{equation}}
\def\eeq{\end{equation}}

\newcommand{\Z}{{\mathbb Z}}
\newcommand{\R}{{\mathbb R}}

\newcommand{\C}{{\mathbb C}}

\newcommand{\N}{{\mathbb N}}
\newcommand{\PP}{{\mathbb P}}

\newcommand{\CC}{{\mathcal C}}
\newcommand{\DD}{{\mathcal D}}

\newcommand{\CB}{{\mathcal B}}

\newcommand{\CI}{{\mathcal I}}

\newcommand{\CU}{{\mathcal U}}

\newcommand{\eqdef}{\overset{\mathrm{def}}=}

\newtheorem{theorem}{Theorem}

\newtheorem{lemma}{Lemma}

\newtheorem{corollary}{Corollary}

\newtheorem*{problem}{Problem}

\sloppy

\allowdisplaybreaks

\numberwithin{equation}{section}

%%%%%%%%%%%%%%%%%%%%%%%%%%%%%%%%%%%%%%%%%%%%%%%%%%%%%%%%%%%%%%%%%%%%%%%%%%%%%%%%%

\begin{document}

\title[Positivity of LE for doubling map]{Uniform positivity of the Lyapunov exponent for monotone potentials generated by the doubling map}

\author[Z.\ Zhang]{Zhenghe Zhang}

\address{Department of Mathematics, Rice University, Houston, TX~77005, USA}

\email{zzhang@rice.edu}

\thanks{The author was in part supported by AMS-Simons travel grant 2014-2016.}

\begin{abstract} We show that for any doubling map generated $C^1$ monotone potential with derivative uniformly bounded away from zero, the Lyapunov exponent of the associated Schr\"odinger operators is uniformly positive for all energies provided the coupling constant is large.
\end{abstract}

\maketitle

\setcounter{tocdepth}{1}
%\tableofcontents

\section{Introduction}\label{s:introduction}

Consider the family of Schr\"odinger operators $H_{\l,v,x}: \ell^2(\N)\rightarrow \ell^2(\N)$
\beq\label{eq:operators}
(H_{\lambda,v,x}u)_n=u_{n+1}+u_{n-1}+\lambda v(2^nx)u_n
\eeq
with Dirichlet condition $u_{-1}=0$. Here $v:\R/\Z\rightarrow \R$ is a bounded measurable function is called the potential, $\lambda\in\R$ the coupling constant, $x\in\R/\Z$ the phase. For simplicity, we leave $\l, v$ in $H_{\lambda,v,x}$ implicit.

Let $T:\R/\Z\rightarrow\R/\Z$ be the doubling map $T(x)=2x$. Consider the eigenvalue equation $H_{x}u=Eu.$ Then there is an associated cocycle map denoted
$A^{(E-\lambda v)}:\R/\Z\rightarrow \mathrm{SL}(2,\R)$, and is given by
\beq\label{eq:cocycle:map}
A^{(E-\lambda v)}(x)=\begin{pmatrix}E-\lambda v(x)& -1\\1& 0\end{pmatrix}
\eeq
which is called a Schr\"odinger cocycle map. Then
$$
(T, A^{(E-\lambda v)}):(\R/\Z)\times\R^2\rightarrow(\R/\Z)\times\R^2
$$
defines a family of (non-invertible) dynamical systems:
\beq\label{eq:schrodinger:cocycle}
(x,\vec w)\mapsto (Tx, A^{(E-\l v)}(x)\vec w)
\eeq
which is called the Schr\"odinger cocycle. The $n$th iteration of the dynamics is denoted by $(\alpha, A^{(E-\l v)})^n=(n\alpha, A^{(E-\l v)}_n)$, $n\ge 0$. Thus, we have the following cocycle iterations:
$$
A^{(E-\l v)}_n(x)=\begin{cases}A^{(E-\l v)}(T^{n-1}x)\cdots A^{(E-\l v)}(x), &n\ge 1,\\ I_2, &n=0.\end{cases}
$$

The relation between operator and cocycle is the following. A complex valued sequence, $u=(u_n)_{n\in\N}\in\C^{\N}$ (not necessarily in $\ell^2(\N)$), is a solution of the equation $H_{x}u=Eu$ if and only if
$$
A^{(E-\l v)}_n(x)\binom{u_{0}}{u_{-1}}=\binom{u_{n}}{u_{n-1}},\ n\in\N.
$$
This says that the iteration of cocycle map generates the transfer matrices for the operator (\ref{eq:operators}).

One of the most important objects in the study of dynamics of Schr\"odinger cocycles and spectral properties of the Schrodinger operators is the Lyapunov exponent:
\begin{align}\label{eq:le}
L(E;\l)&=\lim_{n\rightarrow\infty}\frac1n\int_{\R/\Z}\log\|A^{(E-\l v)}_n(x)\|dx\\
\nonumber&=\inf_{n\ge1}\frac1n\int_{\R/\Z}\log\|A^{(E-\l v)}_n(x)\|dx\\
\nonumber&\ge 0.
\end{align}
 The limit in \eqref{eq:le} exists and is equal to the infimum since the sequence
 $$
 \left\{\int_{\R/\Z}\log\|A^{(E-\l v)}_n(x)\|dx\right\}_{n\ge 0}
 $$
is subadditive. Moreover, by Kingman's Subadditive Ergodic Theorem, we also have:
$$
L(E;\l)=\lim\limits_{n\rightarrow\infty}\frac{1}{n}\log\|A^{(E-\l v)}_n(x)\| \mbox{ for } a.e.\ x\in\R/\Z.
$$

\subsection{Statement of main result and some review}\label{ss:main:result}
Throughout this paper, $C,\ c$ are some universal positive constants depending only on $v$, where $C$ is large and $c$ small. Assume that $v:\R/\Z\rightarrow\R$ satisfies the following conditions:

Let $v$ be $C^1$ on $(0,1)$. We further assume that $\|v\|_{C^1}<C$ and $\inf_{x\in(0,1)} |v'|>c$. Hence, we may define $v(0):=\lim_{x\to 0+}v(x)$. Notice that $v$ is discontinuous at $x=0$.

Then, we show the following result.
\begin{theorem}\label{t.main}
  Let $v$ be as above. Consider the corresponding family of Schr\"odinger operators (\ref{eq:operators}). Then there exists a $C_0=C_0(v)>0$ such that for all $\l>0$,
\begin{center}
$L(E;\l)>\log\l-C_0$, for all $E\in\R$.
\end{center}

\end{theorem}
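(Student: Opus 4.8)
I need to prove that the Lyapunov exponent is uniformly positive (of order $\log\lambda$) for large coupling, with potentials generated by the doubling map. The key feature: $v$ is monotone on the interval $(0,1)$ (since $\inf |v'| > c$, and it's $C^1$, so $v'$ has a constant sign — actually could be either sign on each component... wait, $(0,1)$ is connected, so $v'$ has constant sign). So $v$ is a monotone bijection from $(0,1)$ onto its image, with a single discontinuity at $0$.

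**Strategy.** This is a classic "large coupling" argument. For large $\lambda$, the cocycle $A^{(E-\lambda v)}(x) = \begin{pmatrix} E - \lambda v(x) & -1 \\ 1 & 0\end{pmatrix}$ is strongly hyperbolic except near the point where $E - \lambda v(x) \approx 0$, i.e., $v(x) \approx E/\lambda$. Since $v$ is monotone with derivative bounded below, there's at most one such "critical" point $x_c$, and $|E-\lambda v(x)|$ grows linearly (with slope $\sim c\lambda$) as $x$ moves away from $x_c$.

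The standard tool here is the **Herman-type / subharmonicity trick** or more directly the **"avalanche principle"** combined with the observation that the set of bad phases is small. Actually for the doubling map, the cleanest approach is via **uniform hyperbolicity of finite pieces + the invariance structure**. But let me think about what actually works here.

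Let me reconsider. The potential is $v(2^n x)$. The doubling map expands. The set where $|E - \lambda v(2^n x)|$ is small (say $\le \lambda^{1/2}$) is, for each $n$, a set of $x$'s of measure $\lesssim \lambda^{-1/2}/c$ (by the derivative lower bound, the preimage of a small interval under $v$ is small, and then pulled back by $x \mapsto 2^n x$ which preserves measure ratios... actually $x\mapsto 2^nx$ on $\R/\Z$ is measure-preserving in the sense that preimages of measure-$\epsilon$ sets have measure $\epsilon$). So at each step, most $x$ give a matrix with norm $\gtrsim \lambda$ and a well-defined expanding/contracting direction close to $\binom{1}{0}$.

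**Plan — the approach I would take:**

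First, I would establish the elementary fact: for a single matrix $A^{(t)}(x)$ with $|t(x)| = |E - \lambda v(x)|$ large, $\|A^{(t)}(x)\| \approx |t(x)|$, its most-expanding direction is close to $e_1 = \binom{1}{0}$ and its most-contracting direction (image) is close to $e_1$ as well, with the "cone" estimates being good when $|t(x)| \ge \lambda^{1/2}$, say. Quantitatively, the expanding direction makes an angle $O(1/|t|)$ with $e_1$.

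Second — and this is the heart — I would partition $\R/\Z$ into the "good" set $G = \{x : |E - \lambda v(x)| \ge \lambda^{1/2}\}$ and "bad" set $B$, with $|B| \le C\lambda^{-1/2}$. Since $v$ is monotone, $B$ is (a bounded number of) intervals. Then for a fixed $x$, consider the orbit $x, Tx, T^2x, \dots, T^{n-1}x$. The matrix product $A_n(x)$ telescopes through these; whenever $T^j x \in G$, the matrix is strongly hyperbolic with direction near $e_1$. The problem is consecutive visits to $B$. Here I would use the **avalanche principle** (Goldstein–Schlag): if we have a product of matrices, each with large norm, and consecutive ones are "aligned" (the product doesn't lose too much norm), then $\log\|A_n\| \approx \sum \log\|A_j\| - \sum(\text{corrections})$. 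The alignment is automatic when both $T^j x$ and $T^{j+1}x$ are in $G$ because all the relevant directions cluster near $e_1$.

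Third, to handle bad blocks: I'd want to show that even when $T^jx \in B$, over a block of consecutive bad times the product can't do too much damage, OR — better for the doubling map — use that the bad set $B$ is a union of intervals and its pullbacks $T^{-j}B$ become equidistributed, so a positive-density set of times are good. Actually, the cleanest route: integrate over $x$ and use Fubini. We have $\frac1n\int \log\|A_n(x)\|dx$. I'd try to show this is $\ge \log\lambda - C_0$ directly. By the avalanche principle applied pathwise, $\log\|A_n(x)\| \ge \sum_{j : T^jx \in G} \log\|A(T^jx)\| - (\text{number of good blocks})\cdot C - (\text{damage from bad blocks})$. Each good term is $\ge \log\lambda^{1/2} = \frac12\log\lambda$ — not quite enough; I actually want each good term to be close to $\log\lambda$. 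So refine: split $G$ further — most of $G$ has $|E-\lambda v| \gtrsim \lambda^{1-\epsilon}$ even, no: actually $|E - \lambda v(x)|$ ranges up to $\sim \lambda$, and the average of $\log|E-\lambda v(x)|$ over $x$, even restricted to $G$, is close to $\log\lambda + \int\log|v(x) - E/\lambda|dx$, and that last integral is $O(1)$ since $\log$ is integrable against the monotone $v$ (change variables $u = v(x)$, $du = v'dx \ge c\,dx$, so $\int \log|v(x)-s|dx \le \frac1c\int\log|u - s|du = O(1)$ uniformly in $s$). This is the key integrability input that makes the $C_0$ bound work.

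**Main obstacle.** The main technical difficulty is controlling the **bad blocks** — consecutive times $j$ where $T^jx \in B$ — both in the pathwise avalanche-principle estimate and in ensuring their contribution is $O(1)$ on average, not $O(\log\lambda)$ per block or growing with $n$. For the doubling map this requires understanding the combinatorics of how long an orbit can stay in the small interval(s) $B$ of length $\lambda^{-1/2}$: since $T$ expands by $2$, an orbit starting in an interval of length $\delta$ stays in intervals of length $\le\delta, 2\delta, 4\delta,\dots$ so leaves any fixed $O(\lambda^{-1/2})$ neighborhood within $O(\log\lambda)$ steps — but we need the *number* of such excursions and their cumulative cost, integrated over $x$, to be controlled. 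I would handle this by combining the avalanche principle with a direct estimate: over a maximal bad block of length $\ell$, the product of matrices $\prod A(T^jx)$ still has the form expanding $e_1$ by roughly $\prod|E - \lambda v(T^jx)|$ up to cone errors that telescope geometrically, because even bad matrices (with $t$ anything, including $0$) are "close to rotations of $e_1$" in a controlled sense — the worst case $t = 0$ gives $\begin{pmatrix}0&-1\\1&0\end{pmatrix}$, a rotation by $\pi/2$, and one checks a bad block contributes $\gtrsim -C\ell - C$ to $\log\|A_n\|$ while the good times surrounding it still contribute their full $\log\lambda$ each. Summing: $\log\|A_n(x)\| \ge (\#\text{good times})(\log\lambda - C) - C(\#\text{bad times}) - C(\#\text{blocks})$. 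Finally integrate: $\int (\#\text{bad times in }[0,n))\,dx = \sum_{j=0}^{n-1}|T^{-j}B| = n|B| \le Cn\lambda^{-1/2}$, and $\#\text{blocks} \le \#\text{bad times} + 1$, giving $\frac1n\int\log\|A_n\|dx \ge (1 - C\lambda^{-1/2})(\log\lambda - C) - C\lambda^{-1/2} \ge \log\lambda - C_0$ for $\lambda$ large, and for small $\lambda$ the statement is vacuous or handled by adjusting $C_0$. I'd present the avalanche principle and the one-step linear-algebra lemma first, then the good/bad decomposition, then the Fubini count, in that order.
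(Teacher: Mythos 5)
Your overall architecture (good/bad times, avalanche principle, Fubini over $x$) is a reasonable first instinct, but it has a genuine gap at exactly the point you flag as the ``main obstacle,'' and the gap is not the combinatorics of bad blocks --- it is the alignment problem at \emph{good} times. Your pointwise estimate
$\log\|A_n(x)\|\ge(\#\text{good})(\log\l-C)-C(\#\text{bad})-C(\#\text{blocks})$
is false. At a good time $j$ the matrix $A(T^jx)$ has a contracting direction (close to $e_2$) along which it shrinks vectors by a factor $\approx|E-\l v(T^jx)|^{-1}$; if the incoming vector $A_j(x)e_1$ lies in a small arc around that direction, the step \emph{loses} up to $\log\l$ instead of gaining it, and the image can again land near the contracting arc of the next good matrix, so the loss can persist for many consecutive good steps. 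Whether this trapping occurs, and for how long, depends on $x$ through the full history of the product, not through membership of $T^jx$ in your set $B$. Consequently your Fubini count $\sum_j|T^{-j}B|=n|B|$ does not bound the damage: you would need to bound, uniformly in $j$, the measure of $\{x:\ \text{direction of }A_j(x)e_1\ \text{lies within }\delta\ \text{of the contracting direction of }A(T^jx)\}$. A priori this set could be large (for a constant or non-monotone $v$ it can be all of $\R/\Z$ at some step), and the avalanche principle cannot be invoked across such junctures because its alignment hypothesis is precisely what fails there.

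This missing measure estimate is the actual content of the paper's argument, and it is where the monotonicity hypothesis does its real work (not merely making $\log|v-s|$ integrable or $B$ an interval). The paper passes to the polar form $A=\Lambda(2x)R_{\theta(x)}$ with $\cot\theta=t-v(x)$, tracks the angle $\theta_n(x)$ of $A_n(x)e_1$, and proves two transversality statements: $\frac{d\theta_n}{dx}>c2^n$ wherever defined (the expansion of the doubling map survives the contraction of the cocycle, by a careful cancellation in the derivative of $\cot^{-1}[\l^2 g\cot\theta_{n-1}]$), and the number of solutions of $\theta_n(x)\in\pi\Z+\frac\pi2$ is at most $2^{n+1}-1$. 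Together these give $\mathrm{Leb}\{x:\|\theta_n(x)-\frac\pi2\|<\delta\}<C\delta$ uniformly in $n$, which is exactly the bound on the trapped set that your scheme lacks; the theorem then follows from $\int\log|\cos\theta_k|\,dx\ge -C$. To repair your proof you would need to supply an equivalent of this derivative/counting argument; without it the good/bad decomposition in the potential variable alone cannot close.
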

Clearly, the Lyapunov exponent is uniform bounded away from zero for all energies (uniform positivity) provided the coupling constant $\l$ is large.

Lyapunov exponent is a central object in both dynamical systems and spectral analysis of the one-dimensional ergodic Sch\"odinger operators, see e.g. \cite{wilkinson} for an excellent and concise introduction.

Uniform positivity in particular is a strong indication of Anderson Localization (i.e. pure point spectrum with exponentially decaying eigenfunctions) for almost every phase, and is considered to be difficult to get for whatever type of base dynamics.

In fact, most of the existing results are for two extremal cases in terms of randomness: one is for potentials given by i.i.d random variables where uniform positivity for all nonzero couplings was essentially established by Furstenberg \cite{furstenberg}; the other one is for quasiperiodic potentials where uniform positivity holds at large couplings for a large class of potentials. For base dynamics with intermediate randomness, it's more difficult since few tools are available. We refer the readers to \cite[Section 1.2]{wangzhang} for a more detailed review of the related results. In the following we focus on the case of the doubling map.

Doubling map clearly behaves more like i.i.d random case since it's strongly mixing. In particular, it's expected that uniform positivity holds for all couplings for general potentials. However, the only known examples that uniform positivity holds so far are potentials given by trigonometric polynomials where Herman's \cite{herman} trick applies. This, when indeed happens, happens for a very specific reason. For instance, unlike the quasiperiodic case, the use of subharmonicity breaks down completely if one moves away from trigonometric polynomials. See Appendix Section~\ref{s:appendix.b} for a proof of uniform positivity of the Lyapunov exponent for trigonometric polynomials and for a more detailed comments why it breaks down away from trigonometric polynomials.

For partial results, most existing tools are only able to deal with couplings that are not too large. For small coupling, Chulaevsky-Spencer \cite{chulaevskyspencer} first showed that $L(E)$ is uniformly positive away from the edges of spectrum and away from zero by the formalism of Figotin–Pastur \cite{figotinpastur}. Bourgain-Schlag further explored this techniques, obtained a Large Deviation Theorem, and showed Anderson Localization for almost every phase at same regime of couplings and energies.

If one doesn't ask for uniform positivity, then Damanik-Killip \cite{damanikkillip} obtained that $L(E)$ is positive for almost every energies for all bounded measurable potentials via Kotani Theory. This is already enough to conclude that $H_x$ has empty absolutely continuous spectrum for almost every $x$.

Finally, we would like to mention that in \cite{aviladamanik}, by completely different techniques, we proved some general results regarding positivity of the Lyapunov exponent over general hyperbolic base dynamics which in particular imply the following results for doubling map generated potentials. For some fixed potential and fixed coupling, the Lyapunov exponent is positive away from a finte set of energies. Moreover, uniform positivity holds for some typical potentials. This is some work in progress.

We wish to point out that the question of uniform positivity for the doubling map was also noticed by Bourgain \cite{bourgain}, Damanik \cite{damanik}, Kr\"uger \cite{kruger}, Sadel-Schulzand \cite{sadelschulz}, and Schlag \cite{schlag}. In particular, Damanik \cite{damanik} proposed the following problem (which is \cite[Problem 5]{damanik}):

\begin{problem}
Find a class of functions $v\in L^\infty(\R/\Z)$ (other than trigonometric polynomials) such that for every $\l\ge \l_0(v)$, the Lyapunov exponent $L(E)$ obeys
$$
\inf_{E\in\R} L(E) \ge c\log \l
$$
for some suitable positive constant $c$.
\end{problem}

Hence, our Theorem~\ref{t.main} is a direct positive answer to this Problem. In fact, the lower bound in Theorem~\ref{t.main} is almost optimal.

\subsection{Strategy of the proof and further comments.} The proof follows Young's technique \cite{young} where the author constructed an open set $\CU\subset C^1(\R/\Z, \mathrm{SL}(2,\R))$ such that $(T,A)$ is \textit{nonuniformly hyperbolic} (i.e. positive Lyapunov exponent without being uniformly hyperbolic) for each $A\in\CU$. More concretely, the cocycle is of the form
\beq\label{eq:youngcocycle}
A_\e(x)=\begin{pmatrix}\l & 0\\ 0 & \l^{-1}\end{pmatrix}\cdot R_{\phi_\e(x)},
\eeq
where $\l>0$ is some large constant, $R_\gamma=\left(\begin{smallmatrix}\cos\gamma & -\sin \gamma\\ \sin\gamma & \cos\gamma \end{smallmatrix}\right)\in \mathrm{SO}(2,\R)$, and $\phi_\e\in C^1(\R/\Z, \R/(2\pi\Z))$ is some monotone function of degree one and is supported on some interval of length $\e$ for some small $\e>0$. Hence, the derivative of $\phi_\e$ is of order $\e^{-1}$ for many points on its support.

 In fact, it's also noted by Damanik \cite[Problem 6]{damanik} that one may apply Young's technique to Sch\"odinger cocycle and obtained some set of energies where $L(E)$ is positive. However, as he commented, due to the particular form of the cocycle maps \eqref{eq:youngcocycle}, it's not clear at all how one may apply her technique to Schr\"odinger cocycles and what kind of results one may obtain.

 The main novelties of this paper are thus the following two key observations. First, we observed that the very restricted form of cocycles \eqref{eq:youngcocycle} is not necessary. What's essential is that the derivative of the function attached to the rotation matrix is bounded away from zero. One may even allow a finite set of points of discontinuities.

 Secondly, by the approach of \cite{zhang} (which is also used in \cite{wangzhang}), we know that we may reduce the Schr\"odinger cocycle to its form of polar decomposition, which is very similar to \eqref{eq:youngcocycle}. However, it's well-known that Schr\"odinger cocycle is homotopic to identity while \eqref{eq:youngcocycle} is clearly not. That's why we need to assume the monotonicity of the potentials which necessary implies the existence of some points of discontinuities. Another difficulty is that when we consider the polar decomposition of the Schr\"odinger cocycle, $\l(x)$ in the diagonal matrices $\left(\begin{smallmatrix}\l(x) & 0\\ 0 & \l^{-1}(x)\end{smallmatrix}\right)$ while can be made uniformly large by some simple trick is, however, not a constant, see formula \eqref{eq:polar:form}. This in principle could be dangerous since large $\l(x)$ also implies large contraction which, when differentiating, may cancel the expanding effect of the base dynamics. Fortunately, the polar decomposition form of Schr\"odinger cocycles are so well balanced that the contracting effect can be very well compensated, see the proof of Lemma~\ref{l:deri:theta:n} for details.

Indeed, the competition between the hypebolicities of the base dynamics and of the cocycle map is the main difficulty to get positivity of the Lyapunov exponent. That's part of the reasons why most of the previous results were for small couplings. On the other hand, if one considers the base dynamics to be $x\to kx$ for some large $k$, then Bourgain and Bourgain-Chang \cite{bourgainbougain} obtained uniform positivity for suitable $C^1$ potentials and for any fixed coupling. Basically, the hyperbolicity of the base dynamics which is represented by $k$ beats the hypebolicity of the cocycle map which is represented by $\l$.

Finally, though we are currently only able to deal with the monotonic potentials at large couplings, we wish it builds a new bridge between techniques from dynamical systems and the spectral analysis of ergodic Sch\"odinger operators. In particular, we would like to further explore techniques in this paper to investigate the follow problems:

\begin{enumerate}
\item Show that uniform positivity of the Lyapunov exponent for all nonzero couplings. This makes the doubling map generated potentials behaves almost like potentials generated by i.i.d. random variables.
\item Remove the monotonicity and obtain uniform positivity for some $v\in C^r(\R/\Z,\R)$ which is not trigonometric polynomials. The existence of critical points clearly complicates the computation. However, it seems promising to further explore our techniques.
\item Prove some version of uniform Large Deviation Theorem (in energies) of the Lyapunov exponent. Then one may apply the approach of Bourgain-Schalg \cite{bourgainschlag} to get the H\"older continuity of the $L(E)$ (hence, the H\"older continuity of the integrated density of states via Thouless formula) and the Anderson Localization for almost every $x$.
\end{enumerate}
We are currently working on (3) in which the main difficulties we encountered are nothing other than the strong contracting effect of the cocycle, which is caused by the smallness of $\l(x)^{-1}$.

\section{Uniform positivity of the Lyapunov xxponent: proof of the Main Theorem}\label{s:proof:main:theorem}

Without loss of generality, we may assume that $v(0)=0$ and $\lim_{t\to 1-}v(t)=1$. Notice here in particular, we assume $v'(x)>0$ for all $x\in (0,1)$. Let $t=\frac E\l$ and $\CI=[-1,2]$. Define
\beq\label{eq:g}
g(x,t)\eqdef r^2+1=[t-v(x)]^2+1.
\eeq
Evidently, $c<g(x,t)<C$ for all $(x,t)\in\mathbb R/\mathbb Z\times\CI$. Define
\beq\label{eq:polar:form}
A(x;t,\l)\eqdef \Lambda(2x)\cdot O(x)=\begin{pmatrix}\lambda\sqrt{g(2x,t)} &0\\ 0 & \lambda^{-1}\sqrt{\frac1{g(2x,t)}}\end{pmatrix}\cdot R_{\theta(x;t)}
\eeq
where $R_\gamma=\begin{pmatrix}\cos\gamma & -\sin \gamma\\ \sin\gamma & \cos\gamma \end{pmatrix}\in \mathrm{SO}(2,\R)$ and $\t(x;t)$ is such that
\beq\label{eq:theta0}
\cot\t(x;t)=t-v(x).
\eeq

Let $L(t;\l)$ be the Lyapunov exponent of the cocycle $(T,A(\cdot;t,\l))$. Then by the same argument of \cite[Section 4.2]{zhang} or \cite[Section A.1]{wangzhang}, the following Theorem~\ref{t.B} implies Theorem~\ref{t.main}.

\begin{theorem}\label{t.B}
Let $v$ be as in the Theorem~\ref{t.main} and $A(x;t,\l)$ be as in \eqref{eq:polar:form}. Then there exists a $C_0=C_0(v)$ such that for all $\l>0$,
$$
L(t;\l)>\log\l-C_0 \mbox{ for all } t\in\CI.
$$
\end{theorem}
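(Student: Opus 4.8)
The plan is to adapt Young's scheme \cite{young} to the polar decomposition \eqref{eq:polar:form}: fix the initial direction $w_0=e_1$, track the growth of $A_n(x;t,\l)w_0$ along the orbit, and bound $L(t;\l)$ from below through the variational characterization \eqref{eq:le}. Since the claim is vacuous once $\l\le e^{C_0}$, I would assume $\l$ large throughout, with all constants depending only on $v$. First I would record the elementary properties of the angle in \eqref{eq:theta0}. For $t\in\CI$ and (after the normalization) $v(x)\in[0,1]$ the quantity $\cot\t(x;t)=t-v(x)$ stays in a fixed bounded interval, so $\t(x;t)$ lies in a compact subinterval $[\t_-,\t_+]\subset(0,\pi)$ depending only on $v$; in particular $|\sin\t(x;t)|\ge c$ and $\pi/2+\t(x;t)$ stays a definite distance from the vertical direction $\pi/2\ (\mathrm{mod}\ \pi)$. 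Differentiating \eqref{eq:theta0} gives $\pa_x\t(x;t)=v'(x)\sin^2\t(x;t)$, hence $c<\pa_x\t(x;t)<C$ on $(0,1)$ uniformly in $t\in\CI$: for each $t$, $\t(\cdot;t)$ is an increasing $C^1$ function on $(0,1)$ with derivative bounded above and below, with a single jump at $0\equiv1$. The jump is forced because the Schr\"odinger cocycle is homotopic to the identity, and it is the monotonicity of $v$ that provides the lower bound on $\pa_x\t$, which plays the role of the large derivative of Young's bump $\phi_\e$.

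Next, set $w_k=A_k(x;t,\l)w_0$, let $\psi_k(x)$ be the angle of $w_k$, put $\mu_k(x)=\l\sqrt{g(2T^kx,t)}\in[c\l,C\l]$, and define the iterated angle $\phi_k(x)=\psi_k(x)+\t(T^kx;t)$ (essentially the $\t_{k+1}$ of Lemma~\ref{l:deri:theta:n}). A direct computation from \eqref{eq:polar:form} gives the projective recursion $\tan\psi_{k+1}=\mu_k^{-2}\tan\phi_k$ together with
\[
\log\|A_n(x;t,\l)w_0\|=\sum_{k=0}^{n-1}\log\mu_k(x)+\tfrac12\sum_{k=0}^{n-1}\log\!\big(\cos^2\phi_k(x)+\mu_k(x)^{-4}\sin^2\phi_k(x)\big).
\]
Because $\int_{\R/\Z}\log\mu_k\,dx=\log\l+\tfrac12\int_{\R/\Z}\log g(y,t)\,dy=\log\l+O(1)$ and $\int_{\R/\Z}\log\|A_n\|\,dx\ge\int_{\R/\Z}\log\|A_nw_0\|\,dx$, by \eqref{eq:le} it suffices to prove that the loss term
\[
\CE_k(t):=\int_{\R/\Z}\tfrac12\log\!\big(\cos^2\phi_k(x)+\mu_k(x)^{-4}\sin^2\phi_k(x)\big)\,dx
\]
is bounded below by $-C_0(v)$ uniformly in $k$, $\l$ and $t\in\CI$; summing over $k<n$ then yields $\frac1n\int\log\|A_nw_0\|\,dx\ge\log\l-C_0$ for every $n$, hence $L(t;\l)\ge\log\l-C_0$. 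For $\CE_k(t)$ I would use a one-step estimate: the integrand is $\le0$, is never below $-2\log\l-C$, and is $<-s$ only when $\phi_k(x)$ lies within $O(e^{-s})$ of the vertical (and then necessarily $s\le2\log\l+C$); so a layer-cake computation gives $-\CE_k(t)\le C+\int_C^{2\log\l+C}Ce^{-s}\,ds\le C_0$, provided one knows the measure bound $\big|\{x:\phi_k(x)\text{ is within }\e\text{ of the vertical}\}\big|\le C\e$, uniformly in $k,\l,t$.

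That measure bound is where the doubling map enters, and is the content of Lemma~\ref{l:deri:theta:n}: $\phi_k(\cdot)$ is piecewise $C^1$ with $\lesssim2^k$ monotone branches and total variation $\lesssim2^k$, and on each branch $|\pa_x\phi_k|\gtrsim2^k$ — so the number of times it can cross the vertical ($\lesssim2^k$), divided by the derivative lower bound ($\gtrsim2^k$), gives the $O(\e)$ estimate with a $k$-, $\l$- and $t$-independent constant. Proving this derivative estimate is the main obstacle. Differentiating $w_k=A_kw_0$ by the chain rule, the freshly applied rotation $\t(T^kx;t)$ contributes a term of size $\asymp2^k$ (its $x$-derivative is $2^k\t'(T^kx;t)$), while the inputs from earlier steps $j<k$ are transported forward through the recursion $\tan\psi_{k+1}=\mu_k^{-2}\tan\phi_k$, each step multiplying by a factor built from the \emph{contracting} entry $\mu_j^{-1}=\l^{-1}/\sqrt{g(2T^jx,t)}$; a priori this strong contraction, combined with the $2^j$ coming from $T^j$, could accumulate and overwhelm the main term. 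The point — and here the particular balance of the Schr\"odinger polar form \eqref{eq:polar:form} is used, via the identity $g(\cdot,t)=1/\sin^2\t(\cdot;t)$ linking the diagonal entries to \eqref{eq:theta0} — is that the contraction is exactly compensated, so that inductively $|\pa_x\phi_k|\asymp2^k$ survives; the delicate book-keeping occurs near the critical set, where the straightening factor $\cos^2\phi_k$ degenerates, and in counting the branches created by the jumps of $\t$ as they propagate under $T^k$. Granting Lemma~\ref{l:deri:theta:n}, the rest is the assembly above; since every constant that appears ($\t_\pm$, the branch count, the derivative bound, the layer-cake constants) depends only on the $C^1$-size of $v$ and on the lower bound $c$ for $|v'|$, and all estimates are uniform over $t\in\CI$, the resulting $C_0=C_0(v)$ works for all $t\in\CI$ and all $\l>0$.
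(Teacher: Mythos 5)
Your proposal is correct and follows essentially the same route as the paper: reduce $L(t;\l)$ to $\log\l$ plus the average of $\int\log|\cos\t_k|\,dx$ (you keep the exact one-step norm identity where the paper uses the one-sided bound $\|\Lambda \vec v\|\ge\l|\cos\t_k|\,\|\vec v\|$, a cosmetic difference), and then control that integral by the measure bound of Corollary~\ref{c:badset:upbound}, which rests on the derivative estimate of Lemma~\ref{l:deri:theta:n} and the crossing count of Lemma~\ref{l:badpoints:upbound}, exactly as in the paper; your layer-cake computation replaces the paper's dyadic decomposition into the sets $J_i$ but is equivalent. The only nit is an indexing slip: your $\phi_k=\psi_k+\t(T^kx;t)$ is the paper's $\t_k$, not $\t_{k+1}$.
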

For the convenience of readers, we include in Appendix Section~\ref{s:a} the process that we convert the Schr\"odinger cocycle \eqref{eq:schrodinger:cocycle} to \eqref{eq:polar:form}.

Since all the following estimates will be uniform in $t\in\CI$, we will leave the dependence on $t$ implicit from now on. Let $\t_0(x)=\t(x)$ as defined in \eqref{eq:theta0}. Then inductively, we define the following functions $\phi_n, \theta_n:\R/\Z\to \R$ for $n\ge 1$:
\beq\label{eq:theta:phi}
\phi_n(x)\eqdef\cot^{-1}[\l^2g(T^nx)\cot\t_{n-1}(x)],\quad \t_n(x)\eqdef\phi_n(x)+\t(T^nx).
\eeq
Then the key to the proof of Theorem~B is to estimate the derivatives of $\phi_n(x)$ and $\theta_n(x)$ for all $n\ge 1$, which is done by the following lemma.

\begin{lemma}\label{l:deri:theta:n}
Let $\theta_n$, $n\ge1$ be as in \eqref{eq:theta:phi} and $\theta_0(x)=\theta(x)$. Then for all $t\in\CI$ and for each $n\ge 0$, it holds that
\beq\label{eq:deri:theta}
\frac{d\theta_n}{dx}(x)>c2^n
\eeq
for all $x\in\R/\Z$ where $\theta_n$ is differentiable.
\end{lemma}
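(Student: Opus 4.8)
The plan is to argue by induction on $n$. The base case $n=0$ is immediate: from \eqref{eq:theta0} we have $\cot\theta_0(x)=t-v(x)$, so differentiating gives $-\csc^2\theta_0\cdot\theta_0' = -v'(x)$, hence $\theta_0'(x) = v'(x)\sin^2\theta_0(x) = v'(x)/g(x,t)$. Since $v'>c$ and $c<g<C$ on $\R/\Z\times\CI$, this yields $\theta_0'(x)>c$, which is $>c2^0$ after adjusting the constant. For the inductive step, I would differentiate the defining relation $\phi_n(x)=\cot^{-1}[\l^2 g(T^nx)\cot\theta_{n-1}(x)]$. Writing $u=\l^2 g(T^nx)\cot\theta_{n-1}(x)$ and using $\frac{d}{du}\cot^{-1}u = -\frac{1}{1+u^2}$, we get
\beq\label{eq:phi:n:prime}
\phi_n'(x) = -\frac{1}{1+u^2}\left[\l^2 g'(T^nx)\cdot 2^n\cot\theta_{n-1}(x) - \l^2 g(T^nx)\csc^2\theta_{n-1}(x)\cdot\theta_{n-1}'(x)\right],
\eeq
where the factor $2^n$ comes from $(T^n)'=2^n$. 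Then $\theta_n'(x) = \phi_n'(x) + 2^n\,\theta'(T^nx)$.

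The key computation is to show that the $\theta_{n-1}'$-term in \eqref{eq:phi:n:prime} dominates and is positive after combining with $2^n\theta'(T^nx)$. Using $1+u^2 = 1 + \l^4 g(T^nx)^2\cot^2\theta_{n-1}$ and the trigonometric identity relating $\phi_n$ and $\theta_{n-1}$ through \eqref{eq:theta:phi}, one finds that $\frac{\l^2 g(T^nx)\csc^2\theta_{n-1}}{1+u^2}$ simplifies to $\frac{\sin^2\phi_n}{\sin^2\theta_{n-1}}\cdot\frac{1}{\l^2 g(T^nx)}$ — this is exactly the "well-balanced" cancellation alluded to in the introduction: the potentially dangerous factor $\l^2 g$ in the numerator is compensated by $\l^4 g^2$ in $1+u^2$, leaving a net factor $\l^{-2}g^{-1}$ that is small rather than large. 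So the contracting effect of the large diagonal entry does not blow up the derivative; it merely damps the contribution of $\theta_{n-1}'$. The first term in \eqref{eq:phi:n:prime} (the one with $g'$) is bounded by $C\l^2\cdot 2^n/(1+u^2)$, which after the same simplification is $O(2^n)$ times a bounded quantity, and crucially does not have a sign, so it must be absorbed. The cleanest way is to show that $2^n\theta'(T^nx)$ by itself already gives $\ge c2^n$ (again by the base-case estimate applied at the point $T^nx$, since $\theta'(y)=v'(y)/g(y,t)>c$), and that $\phi_n'(x)$, while possibly negative, satisfies $|\phi_n'(x)| \le \frac{C}{\l^2}\,\theta_{n-1}'(x) + $ (a term that is small compared to $2^n\theta'(T^nx)$). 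Feeding in the inductive bound $\theta_{n-1}'>c2^{n-1}$ is not directly enough unless one also carries an \emph{upper} bound on $\theta_{n-1}'$, so I would strengthen the induction hypothesis to $c2^n < \theta_n'(x) < C2^n$ on the set where $\theta_n$ is differentiable.

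With the two-sided hypothesis in place, the inductive step becomes bookkeeping: the $g'$-term is $O(2^n)$, the $\theta_{n-1}'$-term is $O(\l^{-2})\cdot O(2^{n-1}) = O(\l^{-2}2^{n-1})$, so $|\phi_n'(x)| = O(2^n)$ (upper bound), while for the lower bound on $\theta_n' = \phi_n' + 2^n\theta'(T^nx)$ we use that $2^n\theta'(T^nx) \ge c2^n$ dominates any negative contribution from $\phi_n'$ once $\l$ is large enough — more precisely, the negative part of $\phi_n'$ coming from the $g'$-term is bounded by $C\l^2 2^n/(1+u^2)$, and one checks via the simplification above that this is actually $\le \frac{C}{\l^2}\cdot 2^n \cdot(\text{bounded})$ plus a term controlled by $\theta_{n-1}'$, all of which is $o(2^n)$ in $\l$. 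The main obstacle, as the introduction signals, is carrying out this cancellation correctly: one must verify that in $\phi_n'(x)$ the large parameter $\l^2$ appearing in the numerator is genuinely killed by the $\l^4$ in $1+u^2$, and that no intermediate step secretly divides by something like $\sin^2\theta_{n-1}$ that could be small. Handling the points of nondifferentiability (where $\theta_{n-1}(x)\in\pi\Z$, i.e. $\cot\theta_{n-1}$ blows up) requires noting that $\phi_n$ extends continuously there with $\phi_n\in\pi\Z$ and that the one-sided derivatives still obey the bound, so the estimate holds on a full-measure set; I would dispatch this with a short limiting argument at the end.
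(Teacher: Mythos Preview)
Your argument contains a genuine gap at the algebraic simplification. You claim that
\[
\frac{\l^2 g(T^nx)\csc^2\theta_{n-1}}{1+u^2}=\frac{\sin^2\phi_n}{\sin^2\theta_{n-1}}\cdot\frac{1}{\l^2 g(T^nx)},
\]
but since $\cot\phi_n=u$ one has $1+u^2=\csc^2\phi_n$, so the correct identity is
\[
\frac{\l^2 g(T^nx)\csc^2\theta_{n-1}}{1+u^2}=\l^2 g(T^nx)\cdot\frac{\sin^2\phi_n}{\sin^2\theta_{n-1}},
\]
off from your formula by a factor $\l^4 g^2$. This coefficient is therefore not small: it ranges from order $\l^{-2}$ (when $|\cot\theta_{n-1}|$ is of order $1$ or larger) up to order $\l^2$ (when $\theta_{n-1}$ is near $\frac\pi2$). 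In particular your strengthened two-sided hypothesis $c2^n<\theta_n'<C2^n$ is simply false: at points where $\theta_{n-1}$ is near $\frac\pi2$ one gets $\theta_n'\gtrsim \l^2\theta_{n-1}'$, which blows past any $C2^n$ with $C$ independent of $\l$. A second consequence is that your claim that the $g'$-term is $o(2^n)$ as $\l\to\infty$ is also wrong: writing $h=\l^2 g\cot\theta_{n-1}$, that term equals $\frac{g'}{g}\cdot\frac{h}{1+h^2}\cdot 2^n$, which for $|h|$ of order $1$ is genuinely of size $c\,2^n$ with a constant \emph{independent} of $\l$, and it can be negative. So the fresh contribution $2^n\theta'(T^nx)$ does \emph{not} automatically dominate it for large $\l$.

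The paper's proof resolves this by a case split on the size of $|g(T^{k+1}x)\cot\theta_k(x)|$ relative to $\l^{-3/2}$. When this quantity is large (equivalently $|h|\gtrsim\l^{1/2}$), the $g'$-term is \emph{combined} with the fresh term $2^{k+1}\theta'(T^{k+1}x)$ into a single expression $f(x)\cdot 2^{k+1}v'(T^{k+1}x)$ with $f(x)>c$; the $\theta_k'$-term is then simply discarded as positive. When that quantity is small, the coefficient of $\theta_k'$ is at least $c\l$, so the (positive) $\theta_k'$-term contributes $\ge c\l\cdot 2^k$, which dominates the $O(2^{k+1})$ remainder for large $\l$. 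The point is that the two mechanisms---fresh rotation versus inherited rotation---operate in complementary regimes, and neither one alone suffices; your proposal tries to make the fresh term do all the work, which it cannot.
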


\begin{proof}
For $n=0$, by \eqref{eq:theta0}, it clear that for all $t\in\CI$, it holds for any $x\neq0$ that
\beq\label{eq:theta0:deri}
\left|\frac{d\theta_0}{dx}(x)\right|=\frac{v'(x)}{1+(t-v(x))^2}>c.
\eeq
Suppose \eqref{eq:deri:theta} holds for $n=k$ and we consider $n=k+1$. Notice that $\theta_{k+1}(x)=\phi_{k+1}(x)+\t(T^{k+1}x)$. So we first consider $\phi_{k+1}(x)$. Clearly,
\begin{align}
\nonumber \frac{d\phi_{k+1}}{dx}(x)
=&
\frac{d}{dx}\cot^{-1}[\l^2g(T^{k+1}x)\cot\t_{k}(x)]\\
\label{eq:devi:phi1}=&
-\frac{\l^2\cot\t_{k}(x)\frac{dg(T^{k+1}x)}{dx}}{1+[\l^2g(T^{k+1}x)\cot\t_{k}(x)]^2}\\
\label{eq:devi:phi2}&-\frac{\l^2g(T^{k+1}x)
\frac{d\cot\t_{k}}{dx}}{1+[\l^2g(T^{k+1}x)\cot\t_{k}(x)]^2}.
\end{align}
Evidently, we have
\beq\label{eq:devi:phi2:lowerbound}
\eqref{eq:devi:phi2}=\frac{\l^2g(T^{k+1}x)(1+\cot^2\t_{k}(x))}{1+[\l^2g(T^{k+1}x)\cot\t_{k}(x)]^2}\cdot\frac{d\theta_k}{dx}(x)>0
\eeq
for all differentiable $x$. Moreover, for all $x$ such that $|g(T^{k+1}x)\cot\t_{k}(x)|<C\l^{-\frac32}$, it holds that
\beq\label{eq:devi:phi2:lowerbound2}
\eqref{eq:devi:phi2}>c\l\cdot\frac{d\theta_k}{dx}(x)>c2^{k}\l.
\eeq
Next, we consider $\eqref{eq:devi:phi1}$. In fact, it's more convenient to combine it with $\frac{d\t(T^{K+1}x)}{dx}(x)$. Notice that
$$
\csc\theta(x)=g(x)
$$
for all $x\in\R/\Z$. Hence,
$$
\frac{d\t(T^{k+1}x)}{dx}(x)+\eqref{eq:devi:phi1}
$$
becomes
\beq\label{eq:deri:phi1+theta}
\left[\frac{1}{g(T^{k+1}x)}-\frac{\l^2\cot\t_{k}(x)2(t-v(T^{k+1}x))}{1+[\l^2g(T^{k+1}x)\cot\t_{k}(x)]^2}
\right]2^{k+1}v'(T^{k+1}x)
\eeq
Let $h(x)=\l^2g(T^{k+1}x)\cot\t_{k}(x)$ and let $f(x)$ denotes the function in the square brackets of \eqref{eq:deri:phi1+theta}. Clearly, it holds for all $x\in\R/\Z$ that
\begin{align}\label{eq:f:upbound}
|f(x)|&<\left|\frac1{g(T^{k+1}x)}\right|+\left|\frac{2\l^2\cot\t_{k}(x)g(T^{k+1}x)[t-v(T^{k+1}x)]}{1+[\l^2g(T^{k+1}x)\cot\t_{k}(x)]^2}\right|\\
\nonumber &<1+\frac{4|h(x)|}{1+h^2(x)}\\
\nonumber &<3.
\end{align}
Moreover, for any $x$ such that $|g(T^{k+1}x)\cot\t_{k}(x)|>c\l^{-\frac32}$, which clearly implies $|h(x)|>c\l^{\frac12}$,
it holds that
\begin{align}\label{eq:f:lowerbound}
f(x)&>c-\frac{2|\l^2\cot\t_{k}(x)g(T^{k+1}x)[t-v(T^{k+1}x)]|}{1+[\l^2g(T^{k+1}x)\cot\t_{k}(x)]^2}\\
\nonumber &>c-\frac{C|h(x)|}{1+h^2(x)}\\
\nonumber &>c-C\frac{\l^{\frac12}}{1+\l}\\
\nonumber &>c.
\end{align}

Now if $|g(T^{k+1}x)\cot\t_{k}(x)|>c\l^{-\frac32}$, by \eqref{eq:f:lowerbound} and \eqref{eq:devi:phi2:lowerbound}, it's clearly that
\begin{align}\label{eq:theta:k+1:1}
\frac{d\t_{k+1}}{dx}
&=\left[\eqref{eq:devi:phi1}+\frac{d\t(T^{k+1}x)}{dx}(x)\right]+\eqref{eq:devi:phi2}\\
\nonumber &>f(x)2^{k+1}v'(T^{k+1}x)\\
\nonumber &>c2^{k+1}.
\end{align}
If $|g(T^{k+1}x)\cot\t_{k}(x)|<C\l^{-\frac32}$, by \eqref{eq:f:upbound} and \eqref{eq:devi:phi2:lowerbound2}, it holds that
\begin{align}\label{eq:theta:k+1:2}
\frac{d\t_{k+1}}{dx}
&=\eqref{eq:devi:phi2}+\left[\eqref{eq:devi:phi1}+\frac{d\t(T^{k+1}x)}{dx}(x)\right]\\
\nonumber &>c2^{k}\l-3\cdot 2^{k+1}v'(T^{k+1}x)\\
\nonumber &>c2^{k+1},
\end{align}
provided $\l$ is large. Clearly, \eqref{eq:theta:k+1:1} and \eqref{eq:theta:k+1:2} together imply \eqref{eq:deri:theta} for $n= k+1$, hence, concluding the proof of Lemma~\ref{l:deri:theta:n}.
\end{proof}

By Lemma~\ref{l:deri:theta:n}, $\theta_n$'s are piecewise $C^1$ monotone functions. Next we need to the following lemma.
\begin{lemma}\label{l:badpoints:upbound}
For each $t\in\CI$ and for each $n\ge 0$, the set of discontinuities of $\t_n$ is a subset of
\beq\label{eq:dicontinuity}
\DD_n\eqdef\left\{\frac{j}{2^n},\quad 0\le j\le 2^n-1\right\}.
\eeq

Moreover, let $\CC_n=\{x\in\R/\Z: \t_n(x)\in \pi\Z+\frac\pi2\}$ and $\mathrm{Card}(S)$ stands for cardinality of a set $S$. Then for all $t\in\CI$, it holds that
\beq\label{eq:badpoints}
\mathrm{Card} (\CC_n)\le 2^{n+1}-1,
\eeq

\end{lemma}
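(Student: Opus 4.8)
The plan is to prove both assertions by induction on $n$, tracking how discontinuities and $\frac\pi2$-crossings propagate through the recursion \eqref{eq:theta:phi}. For the first assertion, the base case $n=0$ is clear: $\theta_0(x)=\theta(x)$ has its only discontinuity at $x=0$ (where $v$ jumps), so the set of discontinuities is contained in $\DD_0=\{0\}$. For the inductive step, I would use that $\theta_n(x)=\phi_n(x)+\theta(T^nx)$, where $\phi_n(x)=\cot^{-1}[\l^2 g(T^nx)\cot\theta_{n-1}(x)]$. A discontinuity of $\theta_n$ can only come from (i) a discontinuity of $\theta(T^nx)$, which happens exactly when $T^nx=0$, i.e. $x\in\DD_n$; (ii) a discontinuity of $g(T^nx)=[t-v(T^nx)]^2+1$, which again forces $T^nx=0$, hence $x\in\DD_n$ (note $g$ is bounded below by $c>0$, so $\cot^{-1}$ of its product with $\cot\theta_{n-1}$ introduces no new singularity from $g$ vanishing); (iii) a discontinuity of $\theta_{n-1}(x)$, which by the inductive hypothesis lies in $\DD_{n-1}\subset\DD_n$. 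One should note that $\cot^{-1}$ is a genuine continuous function $\R\to(0,\pi)$, so even where $\cot\theta_{n-1}(x)$ blows up (i.e. $\theta_{n-1}(x)\in\pi\Z$) the composition $\phi_n$ remains continuous; and by Lemma~\ref{l:deri:theta:n} $\theta_{n-1}$ is strictly monotone between its discontinuities, so $\cot\theta_{n-1}$ is at worst a removable-type singularity for $\phi_n$. Hence the set of discontinuities of $\theta_n$ is contained in $\DD_n$, completing the induction.

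For the cardinality bound on $\CC_n$, I would again induct, using the following bookkeeping. Write $I_1,\dots,I_{m}$ for the maximal open intervals on which $\theta_n$ is continuous; by the first part $m\le 2^n$ (the points of $\DD_n$ cut $\R/\Z$ into at most $2^n$ arcs). On each such arc $\theta_n$ is $C^1$ and strictly increasing by Lemma~\ref{l:deri:theta:n}, and $\CC_n\cap I_k$ is the set where the strictly increasing function $\theta_n$ hits the discrete set $\pi\Z+\frac\pi2$. The key quantitative input is that the total variation (increment) of $\theta_n$ is controlled: since $\theta_n$ has degree one as a map $\R/\Z\to\R/(2\pi\Z)$ — this is exactly the homotopy-to-identity property of the Schr\"odinger cocycle emphasized in the introduction — the sum of the increments of $\theta_n$ over all the arcs $I_k$, modulo the jumps at points of $\DD_n$, is $2\pi$ plus the contributions of the jumps. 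More precisely, I would argue that $\theta_n$ winds around the circle exactly once, so $\sum_k (\sup_{I_k}\theta_n - \inf_{I_k}\theta_n)$ together with the jump sizes accounts for total winding; since each $I_k$ contributes at least one endpoint-jump, on each arc $\theta_n$ crosses $\pi\Z+\frac\pi2$ some number $N_k\ge 0$ of times with $\sum_k N_k$ bounded by (number of half-integer multiples of $\pi$ in a $2\pi$-interval, namely $2$) plus a correction for the at-most-$2^n$ arcs. Carrying this out gives $\mathrm{Card}(\CC_n)\le 2\cdot 2^n - 1 = 2^{n+1}-1$, matching \eqref{eq:badpoints}; the base case $n=0$ is $\mathrm{Card}(\CC_0)\le 1$, which holds because $\cot\theta_0(x)=t-v(x)$ is strictly monotone on $(0,1)$ and so equals $0$ at most once.

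The main obstacle I anticipate is making the winding-number / total-increment argument for $\CC_n$ precise in the presence of the jumps at $\DD_n$: one has to argue that the recursion $\theta_n=\phi_n+\theta\circ T^n$ does not create extra windings beyond the single winding forced by the degree-one structure, and that the jump at each point of $\DD_n$ does not "hide" an arbitrary number of $\frac\pi2$-crossings. A clean way to handle this is to track, alongside $\theta_n$, the induced map on projective space $\R/(\pi\Z)$ (the action of $A_n$ on $\mathbb{RP}^1$): the crossings of $\pi\Z+\frac\pi2$ by $\theta_n$ are exactly the preimages under the projectivized cocycle of the horizontal direction, and counting these via the degree of the projectivized $n$-step cocycle map — which is $1$ on each of the at most $2^n$ continuity arcs of $T^n$ — yields the bound. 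I would set this up carefully as the technical heart of the lemma, with the first assertion and the base cases being routine.
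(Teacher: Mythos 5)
Your treatment of the first assertion (the discontinuity set) and of the base case $\mathrm{Card}(\CC_0)\le 1$ is correct and matches the paper's induction. The gap is in the count of $\CC_n$ for $n\ge 1$. Your central quantitative claim --- that $\theta_n$ ``has degree one as a map $\R/\Z\to\R/(2\pi\Z)$'' and therefore winds around the circle essentially once, so that the total increment over the continuity arcs is $2\pi$ plus corrections --- is false. The homotopy-to-identity property concerns the cocycle map itself; here $\theta_n(x)=\phi_n(x)+\theta(T^nx)$ involves $T^n$, which has degree $2^n$, and indeed Lemma~\ref{l:deri:theta:n} forces $\frac{d\theta_n}{dx}>c2^n$, so the total increase of $\theta_n$ summed over its continuity arcs is at least $c2^n$, not $O(1)$. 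The content of \eqref{eq:badpoints} is precisely that this total winding is at most about $2\pi\cdot2^n$, which is what your argument would need to prove rather than assume. Moreover, your accounting implicitly needs each of the $2^n$ arcs to contribute at most $2$ crossings; this is not true arcwise (the crossings inherited from stage $k$ can pile up inside a single dyadic arc of stage $k+1$ --- only the total is controlled), and you flag this step yourself as the unexecuted ``technical heart.''

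The missing idea is the paper's identity \eqref{eq:phi.k+1=theta.k}: since $\cot\phi_{k+1}=\l^2 g(T^{k+1}\cdot)\cot\theta_k$ with $\l^2g>0$, one has $\phi_{k+1}(x)\in\pi\Z+\frac\pi2$ \emph{if and only if} $\theta_k(x)\in\pi\Z+\frac\pi2$. Thus the reparametrization by $\cot^{-1}[\l^2 g\cot(\cdot)]$, however violently it stretches, creates no new half-integer crossings at all: the crossing set of $\phi_{k+1}$ \emph{is} $\CC_k$. The only new crossings of $\theta_{k+1}=\phi_{k+1}+\theta(T^{k+1}\cdot)$ come from the additive term, whose values lie in an interval of length less than $\pi$ inside $(0,\pi)$; combined with the monotonicity of $\theta_{k+1}$ on each of the $2^{k+1}$ dyadic arcs supplied by Lemma~\ref{l:deri:theta:n}, this adds at most one crossing per arc, giving $\mathrm{Card}(\CC_{k+1})\le\mathrm{Card}(\CC_k)+2^{k+1}$ and hence the stated bound by induction. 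Without this exact transfer of the crossing count from $\theta_k$ to $\phi_{k+1}$, a winding-number or degree argument on the projectivized $n$-step cocycle does not close.
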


\begin{proof}
To show \eqref{eq:dicontinuity}, we first notice that $\DD_n$ is the set of discontinuities of $\theta(T^nx)$ and $g(T^nx)$, and that $\DD_n\subset\DD_{n+1}$ for all $n\ge 0$.

Clearly, the set of discontinuities of $\theta_0(x)=\cot^{-1}[t-v(x)]$ is $\DD_0=\{0\}$. Suppose the set of discontinuities of $\theta_{k}$ is a subset of that $\DD_k$. Then by definition, the set of discontinuities of
$$
\phi_{k+1}(x)\eqdef\cot^{-1}[\l^2g(T^{k+1}x)\cot\t_{k}(x)]
$$
is a subset of $\DD_{k+1}$. Hence the set of discontinuities of
$$
\theta_{k+1}(x)=\phi_{k+1}(x)+\theta(T^{k+1}x)
$$
must be a subset of $\DD_{k+1}$. By induction, \eqref{eq:dicontinuity} holds for all $n\ge 0$.

Next we consider \eqref{eq:badpoints}. We will again proceed by induction. Since $t-v(x)$ is uniformly bounded on $\CI\times\R/\Z$, $\theta_0(x)=\cot^{-1}[t-v(x)]=\frac\pi2$ if and only if $t-v(x)=0$ which happens at most once by monotonicity of $v$. It's clearly that
\beq\label{eq:theta:image}
|\t_0(\R/\Z)|<\pi,
\eeq
here and in the following, $|f(J)|$ denotes the Lebesgue measure of the image of a connected interval $J\subset\R/\Z$ under a continuous function $f:J\rightarrow\R$.

Now suppose $\mathrm{Card}(\CC_k)\le 2^{k+1}-1$. Notice that
\beq\label{eq:phi.k+1=theta.k}
\phi_{k+1}(x)\in \Z\pi+\frac\pi2 \Longleftrightarrow \t_k(x)\in\Z\pi+\frac\pi2.
\eeq
For each $0\le j\le 2^{k+1}-1$, let $I_{k+1,j}=[\frac{j}{2^{k+1}}, \frac{j+1}{2^{k+1}})$. By the proof of \eqref{eq:dicontinuity}, $\phi_{k+1}$ and $\t_{k+1}$ is $C^1$ on each $I_{k+1,j}$. It's clearly that
\begin{align}\label{eq:badpoints:I_j}
 |\t_{k+1}(I_{k+1,j})|
 &=\left|[\phi_{k+1}+\theta(T^{k+1}\cdot)](I_{k+1,j})\right|\\
 \nonumber &\le
 \left|\phi_{k+1}(I_{k+1,j})\right|+\left|\theta(T^{k+1}\cdot)(I_{k+1,j})\right|\\
 \nonumber &=\left|\phi_{k+1}(I_{k+1,j})\right|+|\theta_0(\R/\Z)|\\
 \nonumber &<
 \left|\phi_{k+1}(I_{k+1,j})\right|+\pi.
\end{align}
By Lemma~\ref{l:deri:theta:n}, $\t_{k+1}$ is monotone on $I_{k+1,j}$. Hence \eqref{eq:badpoints:I_j} and \eqref{eq:phi.k+1=theta.k} imply that
$$
\mathrm{Card}(\CC_{k+1}\cap I_{k+1,j})\le \mathrm{Card}(\CC_k\cap I_{k+1,j})+2
$$
for each $j$. However, notice that $c<\theta(T^{k+1}x)<\pi-c$ for all $x\in I_{k+1,j}$, which implies that from $\phi_{k+1}$ to $\theta_{k+1}$, the image of $I_{k+1,j}$ may only go up. Hence
\beq\label{eq:badpoints:I_j1}
\mathrm{Card}(\CC_{k+1}\cap I_{k+1,j})\le \mathrm{Card}(\CC_k\cap I_{k+1,j})+1,
\eeq
which in turn implies
\begin{align*}
\mathrm{Card}(\CC_{k+1})&=\sum^{2^{k+1}-1}_{j=0}\mathrm{Card}(\CC_{k+1}\cap I_{k+1,j})\\
&\le \sum^{2^{k+1}-1}_{j=0}\mathrm{Card}(\CC_{k}\cap I_{k+1,j})+2^{k+1}\\
&\le \mathrm{Card}(\CC_{k})+2^{k+1}\\
&\le 2^{k+1}-1+2^{k+1}\\
&=2^{k+2}-1,
\end{align*}
concluding the proof.
\end{proof}

The purpose of Lemma~\ref{l:deri:theta:n} and \ref{l:badpoints:upbound} is the following corollary:

\begin{corollary}\label{c:badset:upbound}
Let $\|\cdot\|_{\R\PP^1}$ denotes the distance to the nearest point(s) in $\pi\Z$. Let
$$
\CB_n(\delta)\eqdef \left\{x\in\R/\Z:\left\|\theta_n(x)-\frac\pi2\right\|_{\R\PP^1}<\delta\right\}.
$$
Then $\mathrm{Leb}(\CB_n(\delta))<C\delta.$
\end{corollary}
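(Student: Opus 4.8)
The plan is to cash in the two lemmas just proved. Lemma~\ref{l:deri:theta:n} says $\theta_n$ is strictly increasing with $\theta_n'>c2^n$ off a finite set, and Lemma~\ref{l:badpoints:upbound} controls both that exceptional set and the number of points where $\theta_n\in\pi\Z+\frac\pi2$. The whole point is that the exponentially large lower bound $c2^n$ on the derivative will be cancelled by the exponentially large number of monotone pieces, leaving an estimate independent of $n$.

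First I would cut $\R/\Z$ into the at most $2^n$ half-open intervals $I_{n,j}=[\tfrac{j}{2^n},\tfrac{j+1}{2^n})$, $0\le j\le 2^n-1$. By Lemma~\ref{l:badpoints:upbound} every discontinuity of $\theta_n$ lies in $\DD_n$, so $\theta_n$ is $C^1$ on the interior of each $I_{n,j}$, and by Lemma~\ref{l:deri:theta:n} it is there a strictly increasing $C^1$ homeomorphism onto the interval $J_j:=\theta_n(I_{n,j})$. Next, note that
\[
\Bigl\{y\in\R:\ \Bigl\|y-\tfrac\pi2\Bigr\|_{\R\PP^1}<\delta\Bigr\}=\bigcup_{k\in\Z}\Bigl(\tfrac\pi2+k\pi-\delta,\ \tfrac\pi2+k\pi+\delta\Bigr)
\]
is a disjoint union of open intervals of length $2\delta$, so $\CB_n(\delta)\cap I_{n,j}$ is the $(\theta_n|_{I_{n,j}})$-preimage of the intersection of this union with the interval $J_j$.

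The key counting step: since $J_j$ is an interval, the number of indices $k$ for which $(\tfrac\pi2+k\pi-\delta,\tfrac\pi2+k\pi+\delta)$ meets $J_j$ is at most $\mathrm{Card}(\CC_n\cap I_{n,j})+2$, because the points $\tfrac\pi2+k\pi$ interior to $J_j$ are exactly the $\theta_n$-images of the points of $\CC_n\cap I_{n,j}$, and at most one further such interval can protrude past each of the two ends of $J_j$. On each of these at most $\mathrm{Card}(\CC_n\cap I_{n,j})+2$ connected components of the preimage, $\theta_n$ increases by at most $2\delta$ while $\theta_n'>c2^n$, so (integrating the derivative bound) the component has length at most $2\delta/(c2^n)$. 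Summing over $j$, and using that each point of $\CC_n$ lies in exactly one $I_{n,j}$ together with $\mathrm{Card}(\CC_n)\le 2^{n+1}-1$ from Lemma~\ref{l:badpoints:upbound},
\[
\mathrm{Leb}(\CB_n(\delta))\le\sum_{j=0}^{2^n-1}\bigl(\mathrm{Card}(\CC_n\cap I_{n,j})+2\bigr)\frac{2\delta}{c2^n}\le\bigl(2^{n+1}+2^{n+1}\bigr)\frac{2\delta}{c2^n}=\frac{8\delta}{c}=C\delta,
\]
as claimed.

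I expect the only delicate point to be the counting step, i.e.\ bounding the number of connected components of the bad preimage on each monotone piece by $\mathrm{Card}(\CC_n\cap I_{n,j})+O(1)$: it is essential to sum $\sum_j\mathrm{Card}(\CC_n\cap I_{n,j})=\mathrm{Card}(\CC_n)$ rather than treat the pieces individually, since $\mathrm{Card}(\CC_n\cap I_{n,j})$ need not be bounded as $j$ varies. Everything else is routine bookkeeping for monotone piecewise-$C^1$ maps, and the cancellation of the factor $2^n$ between the derivative lower bound and the number of pieces is what makes $C=8/c$ genuinely universal.
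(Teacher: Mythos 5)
Your proof is correct and follows essentially the same route as the paper: bound the number of connected components of $\CB_n(\delta)$ by $O(2^n)$ using Lemma~\ref{l:badpoints:upbound}, bound the length of each component by $O(\delta/2^n)$ using the derivative bound of Lemma~\ref{l:deri:theta:n}, and let the factors of $2^n$ cancel. Your counting of components per monotone piece as $\mathrm{Card}(\CC_n\cap I_{n,j})+2$ is in fact a more explicit justification of the component count that the paper simply asserts.
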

\begin{proof}
It suffices to consider $\delta>0$ small. Then by Lemma~\ref{l:badpoints:upbound}, $\CB_n$ consists of at most $2^{n+3}-3$ connected components. By Lemma~\ref{l:deri:theta:n}, on each such component, $\frac{d\theta_n}{dx}\ge c2^n$ which clearly implies that the measure of each component is at most $C\frac{\delta}{2^n}$. Consequently,
\[
\mathrm{Leb}(\CB_n(\delta))< 2^{n+3}\cdot \frac{2\delta}{c2^n}< C\delta.
\]
\end{proof}

Now we are ready to prove Theorem~\ref{t.B}. The following argument more or less follows those in \cite{young}.
\begin{proof}[Proof of Theorem~\ref{t.B}]
Let $\vec e_1=\binom{1}{0}$ and $\vec e_2=\binom{0}{1}$. Evidently, it holds that
\beq\label{eq:lowerbound:LE}
L(t;\l)\ge\lim_{n\to\infty}\frac1n\int_{\R/\Z}\log\left\|A_n(x)\vec e_1\right\|dx,
\eeq
where the existence of the limit above is due to the Oseledec's Multiplicative Ergodic Theorem and Lebesgue's dominated Theorem.

Let $\vec w_{n}(x)=A_n(x)\vec e_1$ and $\vec v_n(x)=R_{\theta(T^nx)}A_{n}(x)\vec e_1$ for $n\ge 0$. Clearly
$$
\vec w_{n+1}(x)=\Lambda(T^{n+1}x)\vec v_n(x),\ \vec v_n(x)=R_{\theta(T^nx)}\vec w_n(x).
$$
In particular, $\|\vec v_n(x)\|=\|\vec w_n(x)\|$ for all $x\in\R/\Z$.

By the definition of $\theta_n(x)$, it clearly holds that
$$
\vec v_n(x)=\cos\t_n(x)\|\vec v_n(x)\|\vec e_1+\sin\t_n(x)\|\vec v_n(x)\|\vec e_2.
$$
Then
\begin{align*}
\|\vec w_{n+1}(x)\|
&=\|\Lambda(T^{n+1}x)\vec v_n(x)\|\\
&\ge \left\|\l\cos\t_n(x)\cdot\|\vec v_n(x)\|\cdot \vec e_1 \right\|\\
&=\l\left|\cos\t_n(x)\right|\cdot \|\vec w_n(x)\|.
\end{align*}
Proceed by induction, we obtain for all $x\in\R/\Z$ and $n\ge 1$
$$
\log\|A_n(x)\vec e_1\|=\|w_n(x)\|\ge \l^n\cdot \prod^{n-1}_{k=0}\left|\cos\t_k(x)\right|\cdot
$$
which together with \eqref{eq:lowerbound:LE} clearly implies:
\beq\label{eq:lowerbound:LE2}
L(t;\l)\ge \log\l+\limsup_{n\to\infty}\frac1n\sum^{n-1}_{k=0}\int_{\R/\Z}\log|\cos\t_{k}(x)|dx.
\eeq
Now we need to bound $\int_{\R/\Z}\log|\cos\t_{k}(x)|dx$ from below where we will need Corollary~\ref{c:badset:upbound}. The following estimate will be independent of $k$. So we fix an arbitrary $k\ge 0$ and some $\delta>0$ small. Define
$$
J_{i}\eqdef\left\{x\in\R/\Z:\left\|\theta_k(x)-\frac\pi2\right\|_{\R\PP^1}< 2^{-i}\delta\right\},\ i\in\N.
$$

It's straightforward calculation to see that
\begin{align}\label{eq:int:thetak1}
\int_{J_0}\log|\cos\t_{k}(x)|dx&=\int_{J_0}\log\left|\sin\left(\t_{k}(x)-\frac\pi2\right)\right|dx\\
\nonumber &\ge\int_{J_0}\log\frac2\pi\left|\t_{k}(x)-\frac\pi2\right|dx\\
\nonumber &=\sum_{i\in\N}\int_{J_i\setminus J_{i+1}}\log\frac2\pi\left|\t_{k}(x)-\frac\pi2\right|dx\\
\nonumber &\ge\sum_{i\in\N}\mathrm{Leb}(J_i\setminus J_{i+1})\log\left(\frac2\pi\cdot 2^{-(i+1)}\delta\right)\\
\nonumber &\ge\sum_{i\in\N}-Ci2^{-i}\delta\\
\nonumber &\ge -C\delta.
\end{align}
Let
$$
J\eqdef(\R/\Z)\setminus J_0=\left\{x:\left\|\theta_k(x)-\frac\pi2\right\|_{\R\PP^1}\ge \delta\right\}.
$$
Then it's clearly that
\beq\label{eq:int:thetak2}
\int_{J}\log|\cos\t_{k}(x)|dx=\int_{J}\log\left|\sin\left(\t_{k}(x)-\frac\pi2\right)\right|dx>C\log\delta.
\eeq

By choosing $\delta=\frac13$, \eqref{eq:int:thetak1} and \eqref{eq:int:thetak2} imply that for all $k\in\N$:
\begin{align*}
\int_{\R/\Z}\log|\cos\t_{k}(x)|dx
&=\int_{J_0}\log|\cos\t_{k}(x)|dx+\int_{J}\log|\cos\t_{k}(x)|dx\\
&\ge -\frac13C-C\log 3\\
&\ge -C,
\end{align*}
which together with \eqref{eq:lowerbound:LE2} clearly implies for all $t\in\CI$:
$$
L(t;\l)>\log\l-C_0,
$$
concluding the proof of Theorem~\ref{t.B}, hence, the proof of Theorem~\ref{t.main}.
\end{proof}

\vskip .5cm

\appendix

\section{Polar decomposition of Schr\" odinger cocycle}\label{s:a}

For $B\in \mathrm{SL}(2,\mathbb R)$, it is a standard result that we can decompose it as $B=U_1\sqrt{B^tB}$, where $U_1\in \mathrm{SO}(2,\mathbb R)$ and $\sqrt{B^tB}$ is a positive symmetric matrix. We can further decompose $\sqrt{B^tB}$ as $\sqrt{B^tB}=U_2\Lambda U_{2}^t$, where $U_2\in \mathrm{SO}(2,\mathbb R)$ and $\Lambda=\left(\begin{smallmatrix}\|B\|&0\\ 0&\|B\|^{-1}\end{smallmatrix}\right)$, thus $B=U_1U_2\Lambda U_2^t$.

Consider a map $B\in C^r(\mathbb R/\mathbb Z, \mathrm{SL}(2,\mathbb R))$ for some $r\ge1$. Then, it can be decomposed as
$$
B(x)=U_1(x)U_2(x)\Lambda(x) U_2^t(x).
$$
By Lemma 10 of \cite{zhang}, $U_1(x)$, $U_2(x)$ and $\Lambda(x)$ are $C^r$ in $x$ as long
as $B(x)\notin\mathrm{SO}(2,\mathbb R)$ for all $x\in\R/\Z$. Let $U(x)=U_1(x)U_2(x)$ and $O(x)=U_2^t(Tx)(U_1U_2)(x)\in \mathrm{SO}(2,\mathbb R)$. Then, it's clear that
$$
U^{-1}(Tx)B(Tx)U(x)=\Lambda(Tx)O(x)
$$
which in particular implies:
\beq
L(T,B)=L(T,(\Lambda\circ T)\cdot O).
\eeq

\vskip .15cm

Back to the Schr\"odinger cocycles~\eqref{eq:cocycle:map}. Since we assumed
$$
v(0)=0,\ \lim_{t\to 1-}v(t)=1,
$$
we only need to consider $t\in \CI=[-1, 2]$ for large $\lambda$, see e.g. \cite[Lemma 11]{zhang}. Then we use a simple trick to ensure that $\|A^{(E-\l v)}(x)\|$ is uniformly of size $\l$ as $\l$ getting large: we instead consider $A^{(t,\l)}=PA^{(E-\l v))}P^{-1}$, where
$$
P=\begin{pmatrix}\sqrt{\lambda}^{-1}& 0\\0& \sqrt{\lambda}\end{pmatrix}
$$
This obviously does not change the dynamics of the cocycle. Thus

$$
A(x)=A^{(t,\lambda)}(x)=\begin{pmatrix}\lambda[t-v(x)]& -\lambda^{-1}\\\lambda & 0\end{pmatrix}.
$$
Now we decompose $A(x)$ to its polar decomposition form. We keep using $U_1, U_2, U, \Lambda, O$ as in the decomposition of $B(x)$ above. Let $r(x,t)=t-v(x)$. Then $r(x,t)$ is uniformly bounded on $\mathbb R/\mathbb Z\times\CI$. If we set
$$
a=a(x;t,\lambda)=r^2+1+\frac{1}{\lambda^4}+\sqrt{(r^2+1+\frac{1}{\lambda^4})^2-\frac{4}{\lambda^4}},
$$
then evidently $c<a(x;t,\l)<C$ for all $(x,t,\lambda)\in\mathbb R/\mathbb Z\times\CI\times[c, \infty)$. Then a direct computation shows that $\|A\|=\lambda\sqrt{\frac{a}{2}}$.

A direct computation shows
$$
U_2=\frac{1}{\sqrt{(a-\frac{2}{\lambda^4})^2+\frac{4}{\lambda^4}r(x)^2}}\begin{pmatrix} a-\frac{2}{\lambda^4}& \frac{2}{\lambda^2}r(x)\\ -\frac{2}{\lambda^2}r(x)& a-\frac{2}{\lambda^4} \end{pmatrix}
$$
For simplicity let
$$
f(x,t,\lambda)=\left(\sqrt{(a-\frac{2}{\lambda^4})^2+\frac{4}{\lambda^4}r(x)^2}\right)^{-1}.
$$
Then, it's straightforward computation that the corresponding upper-left entry of the corresponding $O(x)$ is
$$
c(x,t,\lambda,\alpha)=c_4\left[r(x)-\frac{2r(Tx)}{\lambda^2a(Tx)}+\frac{2r(x)}{\lambda^4a(Tx)}-\frac{4r(Tx)}{\lambda^6 a(x)a(Tx)}\right],
$$
where
$$
c_4=\sqrt{\frac{2}{a(x)}}f(Tx)f(x)a(Tx)a(x).
$$
Hence, we have $c(x,t,\infty,\alpha)=\frac{t-v(x)}{\sqrt{(t-v(x))^2+1}}$. Furthermore, it is not difficult to see that for any fixed $\alpha$,
\begin{center}
$c(x,t,\lambda,\alpha)\rightarrow c(x,t,\infty,\alpha)$ in $C^1((0,1)\times\CI,\mathbb R)$ as $\lambda\rightarrow\infty$.
\end{center}
Indeed, it is easy to see this reduces to the convergence of $a(x,t,\l)$ to $a(x,t,\infty)=2r^2+2$ in $C^1$ topology, which is immediate. Recall that
$$
g(x,t)=r^2+1=[t-v(x)]^2+1.
$$
Evidently, $c<g(x,t)<C$ for all $(x,t)\in\mathbb R/\mathbb Z\times\CI$.

In the proof of the Theorem~B, it is clear that the argument is stable in the $C^1$ perturbation. Thus, we could replace $c(x,t,\l,\a)$ by $c(x,t,\infty,\a)$ for large $\l$ since they are sufficiently close in $C^1$ norm. Thus we can consider the following cocycle map
$$
A(x;t,\l)\eqdef \Lambda(2x)\cdot O(x)=\begin{pmatrix}\lambda\sqrt{g(2x,t)} &0\\ 0 & \lambda^{-1}\sqrt{\frac1{g(2x,t)}}\end{pmatrix}\cdot R_{\theta(x;t)}
$$
as defined in \eqref{eq:polar:form}.

\section{Uniform positivity of the Lyapunov xxponent for trigonometric polynomials}\label{s:appendix.b}

It is known that Hermann's trick works for doubling map, see e.g. \cite[Footnote 8]{damanik} or \cite[Section 1]{bourgainbougain}. We include a proof here for the convenience of the readers.

For simplicity, we consider $v(x)=2\l\cos(2\pi x)$. Let $z_j=e^{2\pi i2^jx}$. Then it clear that
\begin{align*}
\left(\prod^{n-1}_{j=0}z_j\right) A_n^{(E-\l v)}(x)
&=
\prod^{0}_{j=n-1}\left[z_j\cdot\begin{pmatrix}E-\l(z_{j-1}-z_{j-1}^{-1}) & -1\\ 1 & 0\end{pmatrix}\right]\\
&=
\begin{pmatrix}E-\l(z^2_{n-1}-1) & -1\\ 1 & 0\end{pmatrix}\cdots \begin{pmatrix}E-\l(z^2_0-1) & -1\\ 1 & 0\end{pmatrix}\\
&=
\begin{pmatrix}\l^n+M^{(n)}_1 & M^{(n)}_2\\ M^{(n)}_3 & M^{(n)}_4\end{pmatrix},
\end{align*}
where $M^{(n)}_j=M^{(n)}_j(z)$, $j=1,2,3,4$, are polynomials in $z$ without the zero order (i.e. constant) terms. In particular, $M^{(n)}_j(0)=0$.

It's clear that $L_n(z):=\log\|A^{(E-\l v)}_n(z)\|=\log\|(\prod^{n-1}_{j=0}z_j)A^{(E-\l v)}_n(z)\|$ for $x\in\R/\Z$ since then $|z_j|=1$. Moreover, it's also standard fact that $L_n(z)$ is subharmonic on $\C$ for any $n\ge 0$. Thus, we get for all $E\in \R$ that
\begin{align*}
L(E)
&=
\lim_{n\to\infty}\frac1n\int_{\R/\Z} L_n(z)dx\\
&=
\lim_{n\to\infty}\frac1n\int_{\R/\Z} \log\left\|\begin{pmatrix}\l^n+M^{(n)}_1 & M^{(n)}_2\\ M^{(n)}_3 & M^{(n)}_4\end{pmatrix}\right\|dx\\
&=
\lim_{n\to\infty}\frac1n L_n(0)\\
&=
\lim_{n\to\infty}\frac1n\int_{\R/\Z} \log\left\|\begin{pmatrix}(-\l)^n & 0\\ 0 & 0\end{pmatrix}\right\|dx\\
&\ge
\log\l
\end{align*}
where the inequality follows from submean inequality of subharmonic functions and the fact that $M^{(n)}_j(0)=0$ for $j=,1,2,3,4$ and all $n\ge 0$.

It's clear from the proof above that it works for any trigonometric polynomials, though the lower bound might not be not as good. If one moves away from trigonometric polynomials, then this submean inequality is no longer working.

In the quasiperiodic potentials case, a more sophisticated argument by Sorets-Spencer \cite{sorets} works for any nonconstant real analytic potentials, see also \cite{zhang} for a more recent proof via Avila's global theory. However, both arguments rely on the fact that the rotation on the unit circle preserves the height when complexifying phases, i.e. $x+iy+\a$ and $x+iy$ have the same imaginary part. This is clearly not the case for the doubling map which sends $x+iy$ to $2(x+iy)$. In particular, if the radius of analyticity of $v$ is finite, then $T^j(x+iy)=2^j(x+iy)$ soon escapes the region of analyticity for $y\neq0$. So those arguments of \cite{sorets,zhang} break down completely.

\vskip0.4cm
%\noindent{\bf\textit{Acknowledgments.}} \addcontentsline{toc}{subsection}{Acknowledgments}  We would like to thank ...

\end{document}